\newtheorem{lemma}{Lemma}
\newtheorem{theorem}{Theorem}
\def\mydash{\CJKglue\raise0.2ex\hbox{---\kern-0.01em---}\CJKglue}
\begin{document}
\title{A congruence involving alternating harmonic sums modulo $p^{\alpha}q^{\beta}$\footnotetext{\noindent This work is  supported by the National Natural Science Foundation of China, Project (No.10871169) and the Natural Science Foundation of Zhejiang Province, Project (No. LQ13A010012).}}
\author{\scshape Zhongyan Shen$^{1\ast}$    Tianxin Cai$^{2\ast\ast}$ \\
1 Department of Mathematics, Zhejiang International Study University\\Hangzhou 310012, P.R. China\\
2 Department of Mathematics, Zhejiang University\\ Hangzhou 310027, P.R. China\\
$\ast$huanchenszyan@163.com\\
 $\ast\ast$txcai@zju.edu.cn}
\date{}
\maketitle
%------------------0.Abstract----------------------------------------
\textbf{Abstract} In 2014, Wang and Cai established the following
harmonic congruence for any odd prime $p$ and positive integer $r$,
\begin{equation*}
 \sum\limits_{i+j+k=p^{r}\atop{i,j,k\in \mathcal{P}_{p}}}\frac{1}{ijk}\equiv-2p^{r-1}B_{p-3} ~(\bmod ~ p^{r}),
\end{equation*}
where $\mathcal{P}_{n}$ denote the set of positive integers which
are prime to $n$.\\
In this note, we  obtain the congruences for distinct odd primes $p,~q$ and positive integers $\alpha,~\beta$,
\begin{equation*}
   \sum\limits_{\substack{i+j+k=p^{\alpha}q^{\beta}\\i,j,k\in\mathcal{P}_{pq}\\i\equiv j\equiv k\equiv 1\pmod{2}}}\frac{1}{ijk}\equiv\frac{7}{8}(2-q)(1-\frac{1}{q^{3}})p^{\alpha-1}q^{\beta-1}B_{p-3}\pmod{p^{\alpha}}
  \end{equation*}
and
 \begin{equation*}
 \sum\limits_{i+j+k=p^{\alpha}q^{\beta}\atop{i,j,k\in \mathcal{P}_{pq}}}\frac{(-1)^{i}}{ijk}
 \equiv
 \frac{1}{2}(q-2)(1-\frac{1}{q^{3}})p^{\alpha-1}q^{\beta-1}B_{p-3}\pmod{p^{\alpha}}.
\end{equation*}
Finally, we raise a conjecture that for $n>1$ and odd prime power
$p^{\alpha}||n$, $\alpha\geq1$,
\begin{eqnarray}
    \nonumber \sum\limits_{i+j+k=n\atop{i,j,k\in\mathcal{P}_{n}}}\frac{(-1)^{i}}{ijk}
    \equiv \prod\limits_{q|n\atop{q\neq p}}(1-\frac{2}{q})(1-\frac{1}{q^{3}})\frac{n}{2p}B_{p-3}\pmod{p^{\alpha}}
  \end{eqnarray}
  and
\begin{eqnarray}
    \nonumber  \sum\limits_{\substack{i+j+k=n\\i,j,k\in\mathcal{P}_{n}\\i\equiv j\equiv k\equiv 1\pmod{2}}}\frac{1}{ijk}
    \equiv \prod\limits_{q|n\atop{q\neq p}}(1-\frac{2}{q})(1-\frac{1}{q^{3}})(-\frac{7n}{8p})B_{p-3}\pmod{p^{\alpha}}.
  \end{eqnarray}

 \textbf{Keywords} Bernoulli numbers, ~harmonic sums,~congruences

\textbf{MSC} 11A07,~11A41
%------------------1.Introduction----------------------------------------
\section{Introduction.}
Let
\begin{equation*}
 Z(n)=\sum\limits_{i+j+k=n\atop{i,j,k\in\mathcal{P}_{n}}}\frac{1}{ijk},
\end{equation*}
where $\mathcal{P}_{n}$ denote the set of positive integers which
are prime to $n$.\\

At the beginning of the 21th century, Zhao (Cf.\cite{Zh}) first
announced the following curious congruence involving multiple
harmonic sums for any odd prime $p>3$,
\begin{equation}
 Z(p)\equiv-2B_{p-3} ~(\bmod ~ p), \label{eq:1}
\end{equation}
which holds when $p=3$ evidently. Here, Bernoulli numbers $B_{k}$
are defined by the recursive relation:
\begin{center}
 $\sum\limits_{i=0}^{n}\binom{n+1}{i}B_{i}=0,n\geq 1 .$
\end{center}
A simple proof of  \eqref{eq:1} was presented in \cite{Ji}.
Later, Xia and Cai \cite{XC} generalized \eqref{eq:1} to
\begin{equation*}
Z(p)\equiv-\frac{12B_{p-3}}{p-3}-\frac{3B_{2p-4}}{p-4} ~(\bmod ~ p^{2}),
\end{equation*}
where $p>5$ is a prime. \\
In 2014, Wang and Cai \cite{WC} proved for every prime $p\geq 3$
and positive integer $r$,
\begin{equation}
Z(p^{r})\equiv-2p^{r-1}B_{p-3} ~(\bmod ~ p^{r}).\label{eq:2}
\end{equation}
Let $n=2$ or 4, for every positive integer $r\geq \frac{n}{2}$ and
prime $p> n$, Zhao \cite{Zh1} generalized \eqref{eq:2} to
\begin{equation*}
 \sum\limits_{i_{1}+i_{2}+\cdots+i_{n}=p^{r}\atop{i_{1},i_{2},\cdots,i_{n}\in \mathcal{P}_{p}}}\frac{1}{i_{1}i_{2}\cdots i_{n}}
 \equiv-\frac{n!}{n+1}p^{r}B_{p-n-1} ~(\bmod  ~p^{r+1}).
\end{equation*}
Recently, for distinct odd primes $p,~q$ and positive integers
$\alpha,~\beta$, the authors and Jia\cite{CSJ} proved that
 \begin{equation}
  Z(p^{\alpha}q^{\beta})\equiv 2(2-q)(1-\frac{1}{q^{3}})p^{\alpha-1}q^{\beta-1}B_{p-3}\pmod{p^{\alpha}},\label{eq:10}
  \end{equation}
and conjecture that for $n>1$, $p^{\alpha}||n$, $\alpha\geq1$,
\begin{eqnarray}
    \nonumber Z(n)\equiv \prod\limits_{q|n\atop{q\neq p}}(1-\frac{2}{q})(1-\frac{1}{q^{3}})(-\frac{2n}{p})B_{p-3}\pmod{p^{\alpha}}.
  \end{eqnarray}
 This is the generalization of  \eqref{eq:2} and \eqref{eq:10}.\\

In this paper, we consider the congruences involving  alternating harmonic sums and obtain the following theorems.

\begin{theorem}
Let $p,~q$ be distinct odd primes and $\alpha,~\beta$ positive integer, then
 \begin{equation*}
 \sum\limits_{i+j+k=2p^{\alpha}q^{\beta}\atop{i,j,k\in\mathcal{P}_{pq}}}\frac{(-1)^{i}}{ijk}
  =-\frac{1}{2}Z(p^{\alpha}q^{\beta})\equiv (q-2)(1-\frac{1}{q^{3}})p^{\alpha-1}q^{\beta-1}B_{p-3}\pmod{p^{\alpha}}.
  \end{equation*}
\end{theorem}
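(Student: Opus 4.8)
The plan is to prove the statement in two stages: first the exact relation
\[
T:=\sum_{\substack{i+j+k=2N\\ i,j,k\in\mathcal{P}_{pq}}}\frac{(-1)^i}{ijk}=-\frac12 Z(N),\qquad N:=p^{\alpha}q^{\beta},
\]
and then read off the congruence from the already-established \eqref{eq:10} (note $\mathcal{P}_{N}=\mathcal{P}_{pq}$ since $N=p^{\alpha}q^{\beta}$). Indeed $-\tfrac12\cdot 2(2-q)(1-\tfrac1{q^3})=(q-2)(1-\tfrac1{q^3})$, so once the middle identity is in hand the right-hand congruence is immediate from \eqref{eq:10}. Thus all the content lies in relating the alternating sum over triples summing to $2N$ to the plain sum $Z(N)$.

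To exploit the sign I would split the support by parity. Since $pq$ is odd, $\mathcal{P}_{pq}$ contains integers of both parities, and because $i+j+k=2N$ is even the number of odd entries is $0$ or $2$; in the two-odd case the relation $(-1)^i(-1)^j(-1)^k=(-1)^{2N}=1$ forces exactly two signs to equal $-1$. Using the permutation symmetry of $\tfrac{1}{ijk}$ under the constraint we may write $T=\tfrac13\sum\frac{(-1)^i+(-1)^j+(-1)^k}{ijk}$, which evaluates the symmetric factor as $+3$ on all-even triples and $-1$ on two-odd-one-even triples, giving
\[
T=S_{\mathrm{ev}}-\tfrac13 S_{\mathrm{mix}},
\]
where $S_{\mathrm{ev}}$ is the sum over all-even triples and $S_{\mathrm{mix}}$ the sum over two-odd-one-even triples. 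The all-even part is immediate: writing $i=2a,\ j=2b,\ k=2c$ gives $a+b+c=N$ with $a,b,c\in\mathcal{P}_{pq}$ (as $2$ is prime to $pq$), so $S_{\mathrm{ev}}=\tfrac18 Z(N)$. Equivalently, with $g(x)=\sum_{m\in\mathcal{P}_{pq}}\frac{x^m}{m}$ one has the clean functional relation $g(x)+g(-x)=g(x^2)$, whence $T=[x^{2N}]\,g(-x)g(x)^2$ organizes along the same lines.

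The heart of the matter is therefore to evaluate $S_{\mathrm{mix}}$ and to show $S_{\mathrm{mix}}=\tfrac{15}{8}Z(N)$, which is exactly what $S_{\mathrm{ev}}-\tfrac13 S_{\mathrm{mix}}=-\tfrac12 Z(N)$ demands. Fixing the even coordinate $i=2a$, the inner sum runs over odd $j,k\in\mathcal{P}_{pq}$ with $j+k=2(N-a)$; I would collapse it via the partial fraction $\frac{1}{jk}=\frac{1}{j+k}\bigl(\frac1j+\frac1k\bigr)$, reducing $S_{\mathrm{mix}}$ to single harmonic sums $\sum_j\frac1j$ over odd $j\in\mathcal{P}_{pq}$ in prescribed ranges, with the two coprimality conditions $j,\,2(N-a)-j\in\mathcal{P}_{pq}$ resolved by inclusion--exclusion over divisibility by $p$ and by $q$. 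Since $p\mid N$ and $q\mid N$, one has $2(N-a)-j\equiv -2a-j\pmod{p}$, so each deleted class is an arithmetic progression modulo $p$ (and similarly modulo $q$); the resulting one-variable harmonic sums can then be expressed through Bernoulli numbers by the very congruences that drive \cite{WC} and the proof of \eqref{eq:10} in \cite{CSJ}, with only the $B_{p-3}$ term surviving modulo $p^{\alpha}$ and the $q$-local contributions assembling into the factor $(1-\tfrac1{q^3})$ together with the linear factor in $q$.

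The main obstacle is precisely this evaluation of $S_{\mathrm{mix}}$: the all-even contribution is free, but the two-odd-one-even sum carries the genuine arithmetic, and one must show that after the partial-fraction reduction and the $p$- and $q$-local inclusion--exclusion the pieces combine to exactly (or, modulo $p^{\alpha}$, to) $\tfrac{15}{8}Z(N)$. I expect this bookkeeping to parallel closely that of \cite{CSJ}, the new ingredient being the systematic separation of odd and even indices forced by the factor $(-1)^i$. Once $S_{\mathrm{mix}}$ is pinned down, combining it with $S_{\mathrm{ev}}=\tfrac18 Z(N)$ yields $T=-\tfrac12 Z(N)$, and \eqref{eq:10} then delivers the closed form on the right.
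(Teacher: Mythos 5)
Your setup coincides exactly with the paper's: the symmetrization $T=\tfrac13\sum\frac{(-1)^i+(-1)^j+(-1)^k}{ijk}$, the parity split $T=S_{\mathrm{ev}}-\tfrac13 S_{\mathrm{mix}}$, the substitution $i=2a$, $j=2b$, $k=2c$ giving $S_{\mathrm{ev}}=\tfrac18 Z(N)$, and the reduction of everything to \eqref{eq:10} at the end. But at the step you yourself call the heart of the matter — showing $S_{\mathrm{mix}}\equiv\tfrac{15}{8}Z(N)\pmod{p^{\alpha}}$ — your proposal stops being a proof: the partial-fraction collapse $\frac{1}{jk}=\frac{1}{j+k}\bigl(\frac1j+\frac1k\bigr)$, the inclusion--exclusion over the classes divisible by $p$ and $q$, and the conversion of the resulting range-restricted odd harmonic sums into Bernoulli numbers are only described, never carried out. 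That plan amounts to redoing the entire computation behind \eqref{eq:10} from scratch, with the extra complication of parity constraints, and it is not at all routine that the pieces reassemble into $\tfrac{15}{8}Z(N)$; asserting that the bookkeeping ``should parallel \cite{CSJ}'' is the gap.

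The idea you are missing is that $S_{\mathrm{mix}}$ never needs to be evaluated on its own. Writing $T=\tfrac43 S_{\mathrm{ev}}-\tfrac13\bigl(S_{\mathrm{ev}}+S_{\mathrm{mix}}\bigr)$, the combination $S_{\mathrm{ev}}+S_{\mathrm{mix}}$ is just the full non-alternating sum $\sum_{i+j+k=2N,\; i,j,k\in\mathcal{P}_{pq}}\frac{1}{ijk}$, and the paper's Lemma 1 (the scaling lemma, applied with $m=2$, which is coprime to $pq$) gives
\begin{equation*}
\sum_{\substack{i+j+k=2N\\ i,j,k\in\mathcal{P}_{pq}}}\frac{1}{ijk}\equiv 2Z(N)\pmod{p^{\alpha}},
\end{equation*}
proved by the elementary device of writing $i=xN+i_0$ etc.\ and using the reflection bijection $(i_0,j_0,k_0)\mapsto(N-i_0,N-j_0,N-k_0)$ — no Bernoulli-number computation, no inclusion--exclusion. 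This yields $T\equiv\tfrac16 Z(N)-\tfrac23 Z(N)=-\tfrac12 Z(N)$ at once (equivalently $S_{\mathrm{mix}}\equiv 2Z(N)-\tfrac18 Z(N)=\tfrac{15}{8}Z(N)$, the value you wanted). One further caution: this relation, and hence the middle identity in the theorem, is obtained only as a congruence modulo $p^{\alpha}$, not as an exact equality as the first stage of your plan assumes; Lemma 1 itself is only a congruence, so you should not expect $S_{\mathrm{mix}}=\tfrac{15}{8}Z(N)$ to hold exactly.
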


\begin{theorem}
Let $p,~q$ be distinct odd primes and $\alpha,~\beta$ positive integer, then
 \begin{equation*}
 \sum\limits_{i+j+k=p^{\alpha}q^{\beta}\atop{i,j,k\in\mathcal{P}_{pq}}}\frac{(-1)^{i}}{ijk}
  \equiv -\frac{1}{4}Z(p^{\alpha}q^{\beta})\equiv\frac{1}{2}(q-2)(1-\frac{1}{q^{3}})p^{\alpha-1}q^{\beta-1}B_{p-3}\pmod{p^{\alpha}}.
  \end{equation*}
\end{theorem}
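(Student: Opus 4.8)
The plan is to exploit the permutation symmetry of $1/(ijk)$ together with the parity constraint forced by $N:=p^{\alpha}q^{\beta}$ being odd, thereby expressing the alternating sum through the ordinary sum $Z(N)$ and the all-odd sum already recorded in the abstract. Since $i+j+k=N$ is odd, every admissible triple is either \emph{all-odd} or has \emph{exactly one odd entry}; set $B$ to be $\sum \frac1{ijk}$ over the all-odd triples with $i,j,k\in\mathcal P_{pq}$, and let $A$ be the common value of the three sums in which precisely one of $i,j,k$ is odd (these agree by relabelling, as $1/(ijk)$ is symmetric). Reading off $(-1)^{i}$ on each parity pattern gives the exact rational identities
\begin{equation*}
\sum_{\substack{i+j+k=N\\ i,j,k\in\mathcal P_{pq}}}\frac{(-1)^{i}}{ijk}=A-B,\qquad Z(N)=3A+B,
\end{equation*}
and eliminating $A$ yields the key identity
\begin{equation*}
\sum_{\substack{i+j+k=N\\ i,j,k\in\mathcal P_{pq}}}\frac{(-1)^{i}}{ijk}=\frac13 Z(N)-\frac43 B .
\end{equation*}

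Next I would substitute the two available inputs: the congruence \eqref{eq:10} for $Z(N)$ and the companion all-odd congruence stated in the abstract, namely $B\equiv\frac78(2-q)(1-q^{-3})p^{\alpha-1}q^{\beta-1}B_{p-3}\pmod{p^{\alpha}}$. Abbreviating $C=(2-q)(1-q^{-3})p^{\alpha-1}q^{\beta-1}B_{p-3}$, these read $Z(N)\equiv2C$ and $B\equiv\frac78C$, so for $p\ge5$ (where $3$ is invertible modulo $p^{\alpha}$)
\begin{equation*}
\frac13 Z(N)-\frac43 B\equiv\Bigl(\tfrac23-\tfrac76\Bigr)C=-\tfrac12 C\pmod{p^{\alpha}},
\end{equation*}
which is precisely $\frac12(q-2)(1-q^{-3})p^{\alpha-1}q^{\beta-1}B_{p-3}$; since likewise $-\frac14 Z(N)\equiv-\frac14\cdot2C=-\frac12C$, both congruences asserted in the theorem fall out together. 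Equivalently, $A-B\equiv-\frac14(3A+B)$ is the single relation $7A\equiv3B$, confirmed by the two inputs via $A\equiv\frac38C$. (Note that Theorem~1, which controls the $2N$-analogue, decomposes into a different set of parity sums and so does not by itself pin down $A$ and $B$; the all-odd input is genuinely needed.)

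The real content, and the step I expect to be the main obstacle, is the all-odd congruence for $B$ itself, should one prefer to prove the theorem without quoting it. I would attack $B$ by the device already used for $Z$, writing $\frac1{ijk}=\frac1N\bigl(\frac1{ij}+\frac1{jk}+\frac1{ki}\bigr)$, which by symmetry turns $B$ into $\frac3N$ times an all-odd double sum $\sum\frac1{ij}$. The coprimality to $pq$ is then resolved by inclusion--exclusion over divisibility of $i,j,k$ by $p$ and by $q$, which is what produces the multiplicative factor $(2-q)(1-q^{-3})$, while the parity restriction replaces the ordinary partial harmonic sums by their odd-index analogues. Each inner sum is finally evaluated modulo $p^{\alpha}$ through the standard congruences for $\sum i^{-1}$ and $\sum i^{-2}$ over residues prime to $p$, the factor $B_{p-3}$ emerging from the quadratic piece exactly as in \eqref{eq:1} and \eqref{eq:2}.

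The last point to address is $p=3$: there the coefficients $\frac13,\frac43$ are not $3$-adically integral, so the elimination above is invalid, and I would instead use the division-free identity $A-B$ for the alternating sum, computing $A$ and $B$ separately modulo $3^{\alpha}$ (with $B_{p-3}=B_{0}=1$); one finds $A\equiv0$, whence the value reduces to $-B$ and agrees with the asserted representative, in line with the ad hoc verification of \eqref{eq:1} at $p=3$.
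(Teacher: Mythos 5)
Your exact parity identities are correct; in fact they coincide with the identity the paper itself uses in the proof of Theorem 3 (the alternating sum equals $\frac13 Z(p^{\alpha}q^{\beta})-\frac43 B$, where $B$ is the all-odd sum). The problem is that your main argument is circular relative to what is actually available. The all-odd congruence $B\equiv\frac78(2-q)(1-\frac{1}{q^{3}})p^{\alpha-1}q^{\beta-1}B_{p-3}\pmod{p^{\alpha}}$ that you feed in as an ``input'' is not a prior result: it is Theorem 3 of this very paper, and the paper proves Theorem 3 \emph{from} Theorem 2 by precisely your identity, read in the opposite direction. Quoting the abstract for $B$ therefore assumes what is to be proved. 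The only genuinely independent input in the literature cited here is \eqref{eq:10} for $Z(p^{\alpha}q^{\beta})$, and, as you yourself observe, knowing $Z$ alone cannot separate $A$ from $B$, so your linear system is underdetermined without the circular ingredient.

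Your fallback, proving the all-odd congruence directly via $\frac{1}{ijk}=\frac{1}{N}\bigl(\frac{1}{ij}+\frac{1}{jk}+\frac{1}{ki}\bigr)$ and inclusion--exclusion, is only a sketch of what would be the hardest computation in the paper, not a proof: parity-restricted harmonic sums modulo $p^{\alpha}$ introduce quantities (Fermat-quotient-type contributions from odd-index sums) that are absent in the unrestricted case, and one must show they cancel; nothing ``emerges exactly as in \eqref{eq:1} and \eqref{eq:2}''. Your $p=3$ patch (``one finds $A\equiv 0$'') is likewise asserted without justification. The paper's route avoids all of this and is the idea your proposal is missing: it first proves Theorem 1 for the level-$2p^{\alpha}q^{\beta}$ alternating sum (parity decomposition plus Lemma 1), and then, writing $N=p^{\alpha}q^{\beta}$, proves Theorem 2 by splitting the solutions of $i+j+k=2N$ according to which coordinate, if any, exceeds $N$; the substitutions $(i,j,k)\mapsto(N-i,N-j,N-k)$ and $(i,j,k)\mapsto(N+i,j,k)$ (and its analogues in $j$, $k$) show that the level-$2N$ alternating sum is congruent to twice the level-$N$ alternating sum modulo $p^{\alpha}$, whence Theorem 2 follows immediately from Theorem 1, with no division by $3$ and no evaluation of parity-restricted sums needed.
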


\begin{theorem}
Let $p,~q$ be distinct odd primes and $\alpha,~\beta$ positive integer, then
 \begin{equation*}
   \sum\limits_{\substack{i+j+k=p^{\alpha}q^{\beta}\\i,j,k\in\mathcal{P}_{pq}\\i\equiv j\equiv k\equiv 1\pmod{2}}}\frac{1}{ijk}\equiv\frac{7}{16}Z(p^{\alpha}q^{\beta})
  \equiv\frac{7}{8}(2-q)(1-\frac{1}{q^{3}})p^{\alpha-1}q^{\beta-1}B_{p-3}\pmod{p^{\alpha}}.
  \end{equation*}
\end{theorem}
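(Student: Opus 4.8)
The plan is to express the all-odd sum entirely through quantities that are already under control, namely $Z(p^{\alpha}q^{\beta})$ and the alternating sum of Theorem 2. Write $N=p^{\alpha}q^{\beta}$, which is odd, and abbreviate
\[
T=\sum_{\substack{i+j+k=N\\ i,j,k\in\mathcal{P}_{pq}}}\frac{(-1)^{i}}{ijk},
\]
the sum treated by Theorem 2; by relabeling the summation variables (the constraints $i+j+k=N$, coprimality to $pq$, and positivity are all symmetric) the analogous sums with $(-1)^{j}$ or $(-1)^{k}$ in the numerator also equal $T$. The starting point is the parity indicator $\tfrac12(1-(-1)^{i})$ for ``$i$ odd,'' so that the sum I want to evaluate is exactly
\[
\sum_{\substack{i+j+k=N\\ i,j,k\in\mathcal{P}_{pq}}}\frac{1}{ijk}\cdot\frac{(1-(-1)^{i})(1-(-1)^{j})(1-(-1)^{k})}{8}.
\]

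First I would expand the triple product. Setting $a=(-1)^{i}$, $b=(-1)^{j}$, $c=(-1)^{k}$, it equals $1-(a+b+c)+(ab+bc+ca)-abc$, and here the oddness of $N$ is decisive. Since $i+j+k=N$ is odd we have $abc=(-1)^{N}=-1$, and $ab=(-1)^{i+j}=(-1)^{N-k}=-(-1)^{k}=-c$, with the two other mixed terms handled identically, so $ab+bc+ca=-(a+b+c)$. Thus the product collapses to $2-2(a+b+c)$, and after using the symmetry that replaces each single-sign sum by $T$, the all-odd sum becomes $\tfrac14\bigl(Z(N)-3T\bigr)$.

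The remaining steps are pure substitution. Theorem 2 gives $T\equiv-\tfrac14 Z(N)\pmod{p^{\alpha}}$, whence
\[
\tfrac14\bigl(Z(N)-3T\bigr)\equiv\tfrac14\Bigl(Z(N)+\tfrac34 Z(N)\Bigr)=\tfrac{7}{16}Z(N)\pmod{p^{\alpha}},
\]
which is the first asserted congruence. Feeding in the evaluation $Z(N)\equiv 2(2-q)(1-q^{-3})p^{\alpha-1}q^{\beta-1}B_{p-3}\pmod{p^{\alpha}}$ from \eqref{eq:10} then yields the closed form $\tfrac78(2-q)(1-q^{-3})p^{\alpha-1}q^{\beta-1}B_{p-3}$.

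In this argument there is no genuine obstacle once Theorem 2 is in hand: the only place demanding care is the collapse of the triple product, where one must invoke $N$ odd at \emph{every} mixed term, and the bookkeeping that identifies the three single-sign sums with $T$. All the substantive analytic content---the passage from an alternating harmonic sum to Bernoulli numbers modulo $p^{\alpha}$---has already been absorbed into Theorem 2 and \eqref{eq:10}, so I expect Theorem 3 to fall out as a clean combinatorial corollary rather than requiring new estimates.
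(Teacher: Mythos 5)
Your proposal is correct and follows essentially the same route as the paper: both derive the exact identity that the all-odd sum equals $\tfrac14\bigl(Z(p^{\alpha}q^{\beta})-3T\bigr)$ (the paper via a case analysis on parities and the symmetrization $T=\tfrac13\sum((-1)^i+(-1)^j+(-1)^k)/ijk$, you via expanding the indicator product $\tfrac18(1-(-1)^i)(1-(-1)^j)(1-(-1)^k)$ using that $p^{\alpha}q^{\beta}$ is odd), and then both conclude by substituting $T\equiv-\tfrac14 Z$ from Theorem 2 and the evaluation \eqref{eq:10}. The indicator expansion is a tidier derivation of the same combinatorial identity, but the substance is identical.
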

Finally, we have the following\\

\textbf{Conjecture}~ For any positive integer $n>1$, if odd prime
power $p^{\alpha}||n(p^{\alpha}|n,~p^{\alpha+1}\nmid n)$,
$\alpha\geq 1$, then
\begin{eqnarray}
    \nonumber \sum\limits_{i+j+k=n\atop{i,j,k\in\mathcal{P}_{n}}}\frac{(-1)^{i}}{ijk}
    \equiv \prod\limits_{q|n\atop{q\neq p}}(1-\frac{2}{q})(1-\frac{1}{q^{3}})\frac{n}{2p}B_{p-3}\pmod{p^{\alpha}}
  \end{eqnarray}
  and
\begin{eqnarray}
    \nonumber  \sum\limits_{\substack{i+j+k=n\\i,j,k\in\mathcal{P}_{n}\\i\equiv j\equiv k\equiv 1\pmod{2}}}\frac{1}{ijk}
    \equiv \prod\limits_{q|n\atop{q\neq p}}(1-\frac{2}{q})(1-\frac{1}{q^{3}})(-\frac{7n}{8p})B_{p-3}\pmod{p^{\alpha}}.
  \end{eqnarray}
  This is the generalization of  Theorem 2 and Theorem 3.\\
%--------------------3.The proofs of the theorems---------------------------------------
\section{Preliminaries.}
In order to prove the theorems, we need the following lemmas.
%%--------------------2.1 Lemma1---------------------------------------------
\begin{lemma}
Let $p,~q$ be distinct odd primes, $m$ positive integer coprime to
$pq$ and $\alpha,~\beta$ positive integers, then
 \begin{equation*}
 \sum\limits_{i+j+k=mp^{\alpha}q^{\beta}\atop{i,j,k\in\mathcal{P}_{pq}}}\frac{1}{ijk}\equiv mZ(p^{\alpha}q^{\beta})~\pmod{p^{\alpha}}.
\end{equation*}
\end{lemma}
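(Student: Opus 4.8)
The plan is to work modulo $p^{\alpha}$ throughout and to exploit that, writing $M := p^{\alpha}q^{\beta}$, every ingredient is governed by residues modulo $M$. Since $p^{\alpha}\mid M$, the unit $\tfrac{1}{ijk}\bmod p^{\alpha}$ depends only on $i,j,k$ modulo $M$; since $pq\mid M$, the condition $i,j,k\in\mathcal{P}_{pq}$ depends only on $i,j,k$ modulo $M$. First I would sort the triples $(i,j,k)$ of positive integers with $i+j+k=mM$ by their residues: writing each positive integer uniquely as $a+Mx$ with $a\in\{1,\dots,M\}$ and $x\ge 0$, a triple with residues $(a,b,c)$ summing to $mM$ exists precisely when $M\mid(a+b+c)$, and the number of such triples equals the number of $(x,y,z)\in\ZZ_{\ge 0}^{3}$ with $x+y+z=m-t$, where $t:=(a+b+c)/M$. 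Because the nonzero summands force $a,b,c$ coprime to $pq$ (so none equals $M$), only $t\in\{1,2\}$ occur, contributing the counts $\binom{m+1}{2}$ and $\binom{m}{2}$ respectively.

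This bookkeeping would express the left-hand side as
\[
\sum_{\substack{i+j+k=mp^{\alpha}q^{\beta}\\ i,j,k\in\mathcal{P}_{pq}}}\frac{1}{ijk}\equiv \binom{m+1}{2}A+\binom{m}{2}B \pmod{p^{\alpha}},
\]
where $A:=\sum_{a+b+c=M}\tfrac{1}{abc}$ and $B:=\sum_{a+b+c=2M}\tfrac{1}{abc}$, both taken over $(a,b,c)\in\{1,\dots,M\}^{3}$ coprime to $pq$. The first is nothing but $A\equiv Z(p^{\alpha}q^{\beta})\pmod{p^{\alpha}}$. Since $\binom{m+1}{2}-\binom{m}{2}=m$, the target congruence $\equiv mZ(p^{\alpha}q^{\beta})$ reduces to the single $m$-free identity $A+B\equiv 0\pmod{p^{\alpha}}$.

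To finish I would prove $A+B\equiv 0$ by the complement involution $(a,b,c)\mapsto(M-a,M-b,M-c)$. It carries a coprime triple with $a+b+c=M$ (all entries in $\{1,\dots,M-1\}$) to one with sum $2M$, and is a bijection onto exactly the triples counted by $B$. Because $pq\mid M$ we have $M-a\equiv -a\pmod{pq}$, so coprimality is preserved; because $p^{\alpha}\mid M$ we have $\tfrac{1}{(M-a)(M-b)(M-c)}\equiv -\tfrac{1}{abc}\pmod{p^{\alpha}}$. Hence $B\equiv -A$, giving $A+B\equiv 0$ and the lemma.

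The main obstacle is not any single calculation but the combinatorial bookkeeping of the first two steps: one must choose the residue representatives so that the counts $\binom{m+1}{2},\binom{m}{2}$ emerge cleanly, verify that $t=3$ is killed by the coprimality constraint, and match the summation range of $B$ precisely with the image of the complement involution. Here it is essential that $B$'s residues nominally run over $\{1,\dots,M\}$ while coprimality forces them into $\{1,\dots,M-1\}$, which is exactly the image of the involution applied to the $A$-triples. Once these ranges are aligned, the sign-reversing bijection together with the binomial identity does the rest.
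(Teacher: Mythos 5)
Your proposal is correct and is essentially the paper's own proof: the paper likewise writes each variable as $i_{0}+p^{\alpha}q^{\beta}x$, observes that coprimality to $pq$ forces the residue sum to be $p^{\alpha}q^{\beta}$ or $2p^{\alpha}q^{\beta}$ (with $\binom{m+1}{2}$, resp.\ $\binom{m}{2}$, choices of $(x,y,z)$), and uses the same complement involution $(i_{0},j_{0},k_{0})\mapsto(p^{\alpha}q^{\beta}-i_{0},p^{\alpha}q^{\beta}-j_{0},p^{\alpha}q^{\beta}-k_{0})$ to show the sum over residue sum $2p^{\alpha}q^{\beta}$ is $\equiv -Z(p^{\alpha}q^{\beta})$. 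The only cosmetic difference is your packaging of the final step as the $m$-free identity $A+B\equiv 0\pmod{p^{\alpha}}$, where the paper directly computes $\binom{m+1}{2}Z-\binom{m}{2}Z=mZ$.
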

\begin{proof}
For every triple $(i,~j,~k)$ of positive integers which satisfies
$i+j+k= mp^{\alpha}q^{\beta}$, $i,j,k\in\mathcal{P}_{pq}$, we
rewrite
 \begin{equation*}
i=xp^{\alpha}q^{\beta}+i_{0}, ~j = yp^{\alpha}q^{\beta} + j_{0},~
k = zp^{\alpha}q^{\beta}+ k_{0},
  \end{equation*}
where $1\leq i_{0},~j_{0},~k_{0}< p^{\alpha}q^{\beta}$ are prime
to
$pq$ and integers $x,~y,~z\geq 0$.\\
Since $3\leq i_{0}+j_{0}+k_{0}< 3p^{\alpha}q^{\beta}$ and
$i_{0}+j_{0}+k_{0}=(m-x-y-z)p^{\alpha}q^{\beta}$, it is easy to
see that
\begin{align}
&\begin{cases}
 i_{0}+j_{0}+k_{0}=p^{\alpha}q^{\beta}\\
x+y+z=m-1\\
  \end{cases}
 ~~or~~ \begin{cases}
i_{0}+j_{0}+k_{0}=2p^{\alpha}q^{\beta}\\
x+y+z=m-2\\
  \end{cases}.\nonumber
  \end{align}
  Hence
\begin{align}
&
\sum\limits_{i+j+k=mp^{\alpha}q^{\beta}\atop{i,j,k\in\mathcal{P}_{pq}}}\frac{1}{ijk}
=\sum\limits_{i_{0}+j_{0}+k_{0}=p^{\alpha}q^{\beta}\atop{x+y+z=m-1,i_{0},j_{0},k_{0}\in\mathcal{P}_{pq}}}
 \frac{1}{(xp^{\alpha}q^{\beta}+i_{0})(yp^{\alpha}q^{\beta}+j_{0})(zp^{\alpha}q^{\beta}+k_{0})}
\nonumber\\&+\sum\limits_{i_{0}+j_{0}+k_{0}=2p^{\alpha}q^{\beta},1\leq
i_{0},j_{0},k_{0}<p^{\alpha}q^{\beta}\atop{x+y+z=m-2,i_{0},j_{0},k_{0}\in\mathcal{P}_{pq}}}
\frac{1}{(xp^{\alpha}q^{\beta}+i_{0})(yp^{\alpha}q^{\beta}+j_{0})(zp^{\alpha}q^{\beta}+k_{0})}
\nonumber\\&\equiv\sum\limits_{i_{0}+j_{0}+k_{0}=p^{\alpha}q^{\beta}\atop{x+y+z=m-1,i_{0},j_{0},k_{0}\in\mathcal{P}_{pq}}}
 \frac{1}{i_{0}j_{0}k_{0}}
+\sum\limits_{i_{0}+j_{0}+k_{0}=2p^{\alpha}q^{\beta},1\leq
i_{0},j_{0},k_{0}<p^{\alpha}q^{\beta}\atop{x+y+z=m-2,i_{0},j_{0},k_{0}\in\mathcal{P}_{pq}}}
\frac{1}{i_{0}j_{0}k_{0}}
\nonumber\\&\equiv\binom{m+1}{2}\sum\limits_{i_{0}+j_{0}+k_{0}=p^{\alpha}q^{\beta}\atop{i_{0},j_{0},k_{0}\in\mathcal{P}_{pq}}}
 \frac{1}{i_{0}j_{0}k_{0}}
+\binom{m}{2}\sum\limits_{\substack{i_{0}+j_{0}+k_{0}=2p^{\alpha}q^{\beta}\\1\leq
i_{0},j_{0},k_{0}<p^{\alpha}q^{\beta}\\
i_{0},j_{0},k_{0}\in\mathcal{P}_{pq}}}
\frac{1}{i_{0}j_{0}k_{0}},\label{11}
\end{align}
here in the last equation we use the fact that there are
$\binom{m+1}{2}$ or $\binom{m}{2}$ triples $(x,~y,~z)$ of
 nonnegative integers which satisfy $x+y+z=m-1$ or $x+y+z=m-2$,
 respectively. For the second sum in \eqref{11}, note that
 $(i_{0},j_{0},k_{0})\leftrightarrow
 (p^{\alpha}q^{\beta}-i_{0},p^{\alpha}q^{\beta}-j_{0},p^{\alpha}q^{\beta}-k_{0})$
  gives a bijection between the solutions of
  $i_{0}+j_{0}+k_{0}=p^{\alpha}q^{\beta}$ and
  $i_{0}+j_{0}+k_{0}=2p^{\alpha}q^{\beta}$, where $1\leq
i_{0},j_{0},k_{0}<p^{\alpha}q^{\beta}$, thus, we have
\begin{align}
\sum\limits_{\substack{i_{0}+j_{0}+k_{0}=2p^{\alpha}q^{\beta}\\1\leq
i_{0},j_{0},k_{0}<p^{\alpha}q^{\beta}\\
i_{0},j_{0},k_{0}\in\mathcal{P}_{pq}}} \frac{1}{i_{0}j_{0}k_{0}}
&=\sum\limits_{i_{0}+j_{0}+k_{0}=p^{\alpha}q^{\beta}\atop{i_{0},j_{0},k_{0}\in\mathcal{P}_{pq}}}
\frac{1}{(p^{\alpha}q^{\beta}-i_{0}((p^{\alpha}q^{\beta}-j_{0})(p^{\alpha}q^{\beta}-k_{0})}
\nonumber\\&\equiv-\sum\limits_{i_{0}+j_{0}+k_{0}=p^{\alpha}q^{\beta}\atop{i_{0},j_{0},k_{0}\in\mathcal{P}_{pq}}}
\frac{1}{i_{0}j_{0}k_{0}}\pmod{p^{\alpha}q^{\beta}}.\nonumber
\end{align}
Hence, \eqref{11} is congruent to
\begin{align}
\binom{m+1}{2}\sum\limits_{i_{0}+j_{0}+k_{0}=p^{\alpha}q^{\beta}\atop{i_{0},j_{0},k_{0}\in\mathcal{P}_{pq}}}
 \frac{1}{i_{0}j_{0}k_{0}}
-\binom{m}{2}\sum\limits_{i_{0}+j_{0}+k_{0}=p^{\alpha}q^{\beta}\atop{i_{0},j_{0},k_{0}\in\mathcal{P}_{pq}}}
 \frac{1}{i_{0}j_{0}k_{0}}
 \equiv mZ(p^{\alpha}q^{\beta})\pmod{p^{\alpha}q^{\beta}}.
\end{align}
 Then we complete the proof of Lemma 1.
\end{proof}
%%--------------------2.2 Lemma2---------------------------------------------
\begin{lemma}[\cite{CSJ}]
Let $p,~q$ be distinct odd primes and $\alpha,~\beta$ positive integers, if and only if $p=q^{2}+q+1$ or $q=p^{2}+p+1$ or $p|q^{2}+q+1$ and $q|p^{2}+p+1$, we have
 \begin{equation*}
  Z(p^{\alpha}q^{\beta})\equiv 0\pmod{p^{\alpha}q^{\beta}}.
  \end{equation*}
  \end{lemma}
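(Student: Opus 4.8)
The plan is to complement the congruence \eqref{eq:10} with its mirror image obtained by exchanging the two prime powers, to glue the two together via the Chinese Remainder Theorem, and finally to translate the resulting vanishing conditions into the stated arithmetic relations between $p$ and $q$. By \eqref{eq:10} I have
\[
Z(p^{\alpha}q^{\beta})\equiv 2(2-q)\left(1-\frac{1}{q^{3}}\right)p^{\alpha-1}q^{\beta-1}B_{p-3}\pmod{p^{\alpha}},
\]
and since $Z(p^{\alpha}q^{\beta})=Z(q^{\beta}p^{\alpha})$ is symmetric in the two coprime prime powers, interchanging $p\leftrightarrow q$ and $\alpha\leftrightarrow\beta$ gives
\[
Z(p^{\alpha}q^{\beta})\equiv 2(2-p)\left(1-\frac{1}{p^{3}}\right)p^{\alpha-1}q^{\beta-1}B_{q-3}\pmod{q^{\beta}}.
\]
Because $\gcd(p^{\alpha},q^{\beta})=1$, the Chinese Remainder Theorem reduces the claim to showing that $Z(p^{\alpha}q^{\beta})$ vanishes modulo $p^{\alpha}$ and modulo $q^{\beta}$ simultaneously.

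Next I would strip each congruence to a condition modulo a single prime. Since $2$, $q$ and $q^{\beta-1}$ are units modulo $p$, since $B_{p-3}$ is $p$-integral for $p>3$, and since $1-q^{-3}=q^{-3}(q-1)(q^{2}+q+1)$, the relation $Z(p^{\alpha}q^{\beta})\equiv 0\pmod{p^{\alpha}}$ is equivalent to $p\mid(q-2)(q-1)(q^{2}+q+1)B_{p-3}$. Assuming $p$ and $q$ are ordinary at the relevant index, so that $p\nmid B_{p-3}$ and $q\nmid B_{q-3}$ (without such a hypothesis the ``if and only if'' fails at irregular primes), this says that modulo $p$ one of $q\equiv 2$, $q\equiv 1$, or $q^{2}+q+1\equiv 0$ holds, and symmetrically modulo $q$ one of $p\equiv 2$, $p\equiv 1$, or $p^{2}+p+1\equiv 0$ holds.

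The sufficiency direction is then immediate and needs no regularity: each of the three hypotheses forces $q^{3}\equiv 1\pmod{p}$ together with $p^{3}\equiv 1\pmod{q}$ (for instance $p=q^{2}+q+1$ gives $q^{3}\equiv 1\pmod{p}$ directly and $p\equiv 1\pmod{q}$, hence $p^{3}\equiv 1\pmod{q}$), so both factors $1-q^{-3}$ and $1-p^{-3}$ vanish. For necessity I would run through the nine combinations of the three options on each side. The combinations pairing $q\equiv 1$ or $q\equiv 2\pmod{p}$ with $p\equiv 1$ or $p\equiv 2\pmod{q}$ are discarded by a size comparison, since if $p<q$ then $p\equiv 1$ or $2\pmod{q}$ forces $p\in\{1,2\}$, impossible for an odd prime, and symmetrically. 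The two genuinely delicate steps are: (i) $q\equiv 2\pmod{p}$ combined with $q\mid p^{2}+p+1$ is impossible, because writing $p^{2}+p+1=qt$ the congruences pin down $t\equiv(p+1)/2\pmod p$ with $1\le t<p$, hence $t=(p+1)/2$, which cannot divide $p^{2}+p+1$; and (ii) $q\equiv 1\pmod{p}$ combined with $q\mid p^{2}+p+1$ forces $q=p^{2}+p+1$, since the same bookkeeping gives $t\equiv 1\pmod p$ with $1\le t\le p$, so $t=1$. After these reductions the only surviving cases are $p=q^{2}+q+1$, $q=p^{2}+p+1$, and ($p\mid q^{2}+q+1$ and $q\mid p^{2}+p+1$), as claimed.

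I expect the main obstacle to be this necessity direction, specifically the elementary divisibility-and-size arguments (i) and (ii) that prevent the spurious residues $q\equiv 1,2\pmod{p}$ from producing solutions outside the stated list, together with the care needed over the Bernoulli factor at irregular primes.
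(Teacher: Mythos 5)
There is nothing in the paper to compare your argument against: Lemma 2 is quoted from \cite{CSJ} with no proof given here, so your derivation has to stand on its own, and it does. Reconstructing the lemma from \eqref{eq:10} is the natural route: the symmetry of $Z(p^{\alpha}q^{\beta})$ in the two prime powers gives the mirror congruence modulo $q^{\beta}$, the Chinese Remainder Theorem splits the claim into two one-prime statements, and after discarding unit factors and writing $1-q^{-3}=q^{-3}(q-1)(q^{2}+q+1)$, vanishing modulo $p^{\alpha}$ becomes exactly $p\mid(q-2)(q-1)(q^{2}+q+1)B_{p-3}$, as you say. Your nine-case analysis of the necessity direction is complete, and its two delicate cases are argued correctly: in (i) the forced value $t=(p+1)/2$ would have to divide $p^{2}+p+1=p(p+1)+1\equiv 1\pmod{(p+1)/2}$, impossible since $(p+1)/2>1$; in (ii) the bounds $1\le t\le p$ together with $t\equiv 1\pmod{p}$ force $t=1$, i.e. $q=p^{2}+p+1$. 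The four cases pairing small residues on both sides die by your size comparison, and sufficiency is indeed unconditional, since each listed condition implies both $p\mid q^{3}-1$ and $q\mid p^{3}-1$. Finally, your caveat about the Bernoulli factor is a genuine observation, not a gap in your argument: the ``only if'' direction really does require $p\nmid B_{p-3}$ and $q\nmid B_{q-3}$, since if both $p$ and $q$ were Wolstenholme primes then \eqref{eq:10} and its mirror would force $Z(p^{\alpha}q^{\beta})\equiv 0\pmod{p^{\alpha}q^{\beta}}$ with no arithmetic relation between $p$ and $q$ (a short computation with the two known Wolstenholme primes $16843$ and $2124679$ indicates that none of the three listed conditions holds for that pair). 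That imprecision is inherited from the statement in \cite{CSJ}, and it propagates to Remark 1, which transfers the same ``if and only if'' to the alternating sums; your formulation, with the regularity hypothesis made explicit, is the correct one.
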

%%--------------------2.2 Lemma9---------------------------------------------

%%--------------------3 the Proof of Theorem --------------------------------------------
\section{Proofs of the theorems.}
\begin{proof}[Proof of Theorem 1 \\]
Since $i+j+k=2p^{\alpha}q^{\beta}$ is even, either $i,~j,~k$ are
all even or one of $i,~j,~k$ is even and the other two are odd,
then
\begin{eqnarray}\label{3}
      \sum\limits_{i+j+k=2p^{\alpha}q^{\beta}\atop{i,j,k\in\mathcal{P}_{pq}}}\frac{1}{ijk}
   =\sum\limits_{i+j+k=2p^{\alpha}q^{\beta},i,j,k\in\mathcal{P}_{pq}\atop{i,j,k~are~all~even}}\frac{1}{ijk}
   +\sum\limits_{i+j+k=2p^{\alpha}q^{\beta},i,j,k\in\mathcal{P}_{pq}\atop{exactly ~one~ of~ i,j,k~is~even}}\frac{1}{ijk}.
  \end{eqnarray}
By  symmetry, we have
 \begin{eqnarray}\label{4}
      \sum\limits_{i+j+k=2p^{\alpha}q^{\beta}\atop{i,j,k\in\mathcal{P}_{pq}}}\frac{(-1)^{i}}{ijk}
   =\frac{1}{3} \sum\limits_{i+j+k=2p^{\alpha}q^{\beta}\atop{i,j,k\in\mathcal{P}_{pq}}}\frac{(-1)^{i}+(-1)^{j}+(-1)^{k}}{ijk}.
  \end{eqnarray}
  If $i,~j,~k$ are all even, the right hand of (\ref{4})  equals to
  \begin{eqnarray}\label{5}
  \sum\limits_{i+j+k=2p^{\alpha}q^{\beta},i,j,k\in\mathcal{P}_{pq}\atop{i,j,k~are~all~even}}\frac{1}{ijk}
  =\frac{1}{8}\sum\limits_{i+j+k=p^{\alpha}q^{\beta}\atop{i,j,k\in\mathcal{P}_{pq}}}\frac{1}{ijk},
     \end{eqnarray}
     where we replace $i,~j,~k$ by $2i,~2j,~2k$ respectively.\\

 If one of $i,~j,~k$ is even and the other two are odd, the right hand of (\ref{4})  equals to
 \begin{eqnarray}
  \nonumber   -\frac{1}{3}\sum\limits_{i+j+k=2p^{\alpha}q^{\beta},i,j,k\in\mathcal{P}_{pq}\atop{exactly ~one~ of~ i,j,k~is~even}}\frac{1}{ijk}.
  \end{eqnarray}
 Thus, (\ref{4})  is equal to
 \begin{eqnarray}
  \nonumber  \sum\limits_{i+j+k=2p^{\alpha}q^{\beta}\atop{i,j,k\in\mathcal{P}_{pq}}}\frac{(-1)^{i}}{ijk}
   &=&\sum\limits_{i+j+k=2p^{\alpha}q^{\beta},i,j,k\in\mathcal{P}_{pq}\atop{i,j,k~are~all~even}}\frac{1}{ijk} -\frac{1}{3}\sum\limits_{i+j+k=2p^{\alpha}q^{\beta},i,j,k\in\mathcal{P}_{pq}\atop{exactly ~one~ of~ i,j,k~is~even}}\frac{1}{ijk}\\
  \nonumber &=&\frac{4}{3}\sum\limits_{i+j+k=2p^{\alpha}q^{\beta},i,j,k\in\mathcal{P}_{pq}\atop{i,j,k~are~all~even}}\frac{1}{ijk} -\frac{1}{3}(\sum\limits_{i+j+k=2p^{\alpha}q^{\beta},i,j,k\in\mathcal{P}_{pq}\atop{i,j,k~are~all~even}}\frac{1}{ijk}\\
  \nonumber & &+\sum\limits_{i+j+k=2p^{\alpha}q^{\beta},i,j,k\in\mathcal{P}_{pq}\atop{exactly ~one~ of~ i,j,k~is~even}}\frac{1}{ijk}).
  \end{eqnarray}
  By using (\ref{3}) and (\ref{5}), we have
  \begin{eqnarray}
  \nonumber  \sum\limits_{i+j+k=2p^{\alpha}q^{\beta}\atop{i,j,k\in\mathcal{P}_{pq}}}\frac{(-1)^{i}}{ijk}
  =\frac{1}{6}Z(p^{\alpha}q^{\beta}) -\frac{1}{3}\sum\limits_{i+j+k=2p^{\alpha}q^{\beta}\atop{i,j,k\in\mathcal{P}_{pq}}}\frac{1}{ijk}.
  \end{eqnarray}
  It follows from Lemma 1 that
  \begin{eqnarray}
  \nonumber  \sum\limits_{i+j+k=2p^{\alpha}q^{\beta}\atop{i,j,k\in\mathcal{P}_{pq}}}\frac{(-1)^{i}}{ijk}
  =\frac{1}{6}Z(p^{\alpha}q^{\beta}) -\frac{2}{3}Z(p^{\alpha}q^{\beta})=-\frac{1}{2}Z(p^{\alpha}q^{\beta})\pmod{p^{\alpha}}.
  \end{eqnarray}
 By using \eqref{eq:10}, we complete the proof of Theorem 1.
\end{proof}
%--------------------4.The proofs of the theorems 2---------------------------------------
\begin{proof}[Proof of Theorem 2 \\]
For every triple $(i,~j,~k)$ of positive integers which satisfies
$i+j+k=2p^{\alpha}q^{\beta},~i,j,k\in \mathcal{P}_{pq}$, we take it into 3 cases.\\

Cases 1. If $1\leq i,~j,~k\leq p^{\alpha}q^{\beta}-1$ are coprime
to $pq$, $(i,~j,~k)\leftrightarrow
(p^{\alpha}q^{\beta}-i,~p^{\alpha}q^{\beta}-j,~p^{\alpha}q^{\beta}-k)$
is a bijection between the solutions of
$i+j+k=2p^{\alpha}q^{\beta}$ and
$i+j+k=p^{\alpha}q^{\beta},~i,j,k\in \mathcal{P}_{pq}$, we have
 \begin{align}
 \sum_{\substack{i+j+k=2p^{\alpha}q^{\beta}\\i,j,k\in \mathcal{P}_{pq}\\1\leq i,~j,~k\leq p^{\alpha}q^{\beta}-1}}\frac{(-1)^{i}}{ijk}
 &\equiv \sum\limits_{i+j+k=p^{\alpha}q^{\beta}\atop{i,j,k\in \mathcal{P}_{pq}}}\frac{(-1)^{p^{\alpha}q^{\beta}-i}}{(p^{\alpha}q^{\beta}-i)(p^{\alpha}q^{\beta}-j)(p^{\alpha}q^{\beta}-k)}
\nonumber\\& \equiv \sum\limits_{i+j+k=p^{\alpha}q^{\beta}\atop{i,j,k\in
\mathcal{P}_{pq}}}\frac{(-1)^{i}}{ijk}\pmod{p^{\alpha}}.\label{eq:6}
\end{align}
Cases 2. If $p^{\alpha}q^{\beta}+1\leq i\leq
2p^{\alpha}q^{\beta}-1,1\leq j,~k\leq p^{\alpha}q^{\beta}-1$ are
coprime to $pq$, $(i,~j,~k)\leftrightarrow
(p^{\alpha}q^{\beta}+i,~j,~k)$ is a bijection between the
solutions of $i+j+k=2p^{\alpha}q^{\beta}$ and
$i+j+k=p^{\alpha}q^{\beta},~i,j,k\in \mathcal{P}_{pq}$, we have
 \begin{align}
 \sum_{\substack{i+j+k=2p^{\alpha}q^{\beta}\\i,j,k\in \mathcal{P}_{pq}\\p^{\alpha}q^{\beta}+1\leq i\leq 2p^{\alpha}q^{\beta}-1,1\leq j,~k\leq p^{\alpha}q^{\beta}-1}}\frac{(-1)^{i}}{ijk}
 &\equiv \sum\limits_{i+j+k=p^{\alpha}q^{\beta}\atop{i,j,k\in \mathcal{P}_{pq}}}\frac{(-1)^{p^{\alpha}q^{\beta}+i}}{(p^{\alpha}q^{\beta}+i)jk}
\nonumber\\& \equiv -\sum\limits_{i+j+k=p^{\alpha}q^{\beta}\atop{i,j,k\in
\mathcal{P}_{pq}}}\frac{(-1)^{i}}{ijk}\pmod{p^{\alpha}}.\label{eq:7}
\end{align}
Cases 3. If $p^{\alpha}q^{\beta}+1\leq j\leq
2p^{\alpha}q^{\beta}-1,1\leq i,~k\leq p^{\alpha}q^{\beta}-1$ or
$p^{\alpha}q^{\beta}+1\leq k\leq 2p^{\alpha}q^{\beta}-1,1\leq
i,~j\leq p^{\alpha}q^{\beta}-1$ are corime to $pq$,
$(i,~j,~k)\leftrightarrow (i,~p^{\alpha}q^{\beta}+j,~k)$ in the
former and $(i,~j,~k)\leftrightarrow
(i,~j,~p^{\alpha}q^{\beta}+k)$ in the later are the bijections
between the solutions of $i+j+k=2p^{\alpha}q^{\beta}$ and
$i+j+k=p^{\alpha}q^{\beta},~i,j,k\in \mathcal{P}_{pq}$, we have
 \begin{align}
 &\sum_{\substack{i+j+k=2p^{\alpha}q^{\beta}\\i,j,k\in \mathcal{P}_{pq}\\p^{r}+1\leq j\leq 2p^{\alpha}q^{\beta}-1,1\leq i,~k\leq p^{\alpha}q^{\beta}-1}}\frac{(-1)^{i}}{ijk}
+\sum_{\substack{i+j+k=2p^{\alpha}q^{\beta}\\i,j,k\in \mathcal{P}_{pq}\\p^{\alpha}q^{\beta}+1\leq
k\leq 2p^{\alpha}q^{\beta}-1,1\leq i,~j\leq p^{\alpha}q^{\beta}-1}}\frac{(-1)^{i}}{ijk}
 \nonumber\\&\equiv \sum\limits_{i+j+k=p^{\alpha}q^{\beta}\atop{i,j,k\in
 \mathcal{P}_{pq}}}\frac{(-1)^{i}}{i(p^{\alpha}q^{\beta}+j)k}+\sum\limits_{i+j+k=p^{\alpha}q^{\beta}\atop{i,j,k\in
 \mathcal{P}_{pq}}}\frac{(-1)^{i}}{ij(p^{\alpha}q^{\beta}+k)}
\nonumber\\& \equiv 2\sum\limits_{i+j+k=p^{\alpha}q^{\beta}\atop{i,j,k\in
\mathcal{P}_{pq}}}\frac{(-1)^{i}}{ijk}\pmod{p^{\alpha}}.\label{eq:8}
\end{align}
Combining \eqref{eq:6}-\eqref{eq:8}, it follows that
 \begin{align}
 &\sum\limits_{i+j+k=2p^{\alpha}q^{\beta}\atop{i,j,k\in \mathcal{P}_{pq}}}\frac{(-1)^{i}}{ijk}=
 \sum_{\substack{i+j+k=2p^{\alpha}q^{\beta}\\i,j,k\in \mathcal{P}_{pq}\\1\leq i,~j,~k\leq p^{\alpha}q^{\beta}-1}}\frac{(-1)^{i}}{ijk}
+ \sum_{\substack{i+j+k=2p^{\alpha}q^{\beta}\\i,j,k\in
\mathcal{P}_{pq}\\p^{\alpha}q^{\beta}+1\leq i\leq 2p^{\alpha}q^{\beta}-1,1\leq j,~k\leq
p^{\alpha}q^{\beta}-1}}\frac{(-1)^{i}}{ijk}
\nonumber\\&+\sum_{\substack{i+j+k=2p^{\alpha}q^{\beta}\\i,j,k\in
\mathcal{P}_{pq}\\p^{\alpha}q^{\beta}+1\leq j\leq 2p^{\alpha}q^{\beta}-1,1\leq i,~k\leq
p^{\alpha}q^{\beta}-1}}\frac{(-1)^{i}}{ijk}
+\sum_{\substack{i+j+k=2p^{\alpha}q^{\beta}\\i,j,k\in
\mathcal{P}_{pq}\\p^{\alpha}q^{\beta}+1\leq k\leq 2p^{\alpha}q^{\beta}-1,1\leq i,~j\leq
p^{\alpha}q^{\beta}-1}}\frac{(-1)^{i}}{ijk}
 \nonumber\\&\equiv 2\sum\limits_{i+j+k=p^{\alpha}q^{\beta}\atop{i,j,k\in
\mathcal{P}_{pq}}}\frac{(-1)^{i}}{ijk}\pmod{p^{\alpha}}.\nonumber
\end{align}
By Theorem 1, we complete the proof of Theorem 2.
\end{proof}

\begin{proof}[Proof of Theorem 3 \\]
 Since $i+j+k=p^{\alpha}q^{\beta}$ is odd, either $i,~j,~k$ are all odd or one of $i,~j,~k$ is odd and the other two are even, then
\begin{align}
     Z(p^{\alpha}q^{\beta})
   =\sum_{\substack{i+j+k=p^{\alpha}q^{\beta}\\i,j,k\in\mathcal{P}_{pq}\\i,j,k~are~all~odd}}\frac{1}{ijk}
   +\sum_{\substack{i+j+k=p^{\alpha}q^{\beta}\\i,j,k\in\mathcal{P}_{pq}\\exactly ~one~ of~ i,j,k~is~odd}}\frac{1}{ijk}.\label{9}
  \end{align}
By  symmetry, similar to Theorem 1, we have
 \begin{eqnarray}\label{10}
  \nonumber  \sum\limits_{i+j+k=p^{\alpha}q^{\beta}\atop{i,j,k\in\mathcal{P}_{pq}}}\frac{(-1)^{i}}{ijk}
  &=&\frac{1}{3} \sum\limits_{i+j+k=p^{\alpha}q^{\beta}\atop{i,j,k\in\mathcal{P}_{pq}}}\frac{(-1)^{i}+(-1)^{j}+(-1)^{k}}{ijk}\\
   \nonumber &=&-\sum\limits_{\substack{i+j+k=p^{\alpha}q^{\beta}\\i,j,k\in\mathcal{P}_{pq}\\i,j,k~are~all~odd}}\frac{1}{ijk} +\frac{1}{3}\sum\limits_{\substack{i+j+k=p^{\alpha}q^{\beta}\\i,j,k\in\mathcal{P}_{pq}\\exactly ~one~ of~ i,j,k~is~odd}}\frac{1}{ijk}\\
  \nonumber &=&-\frac{4}{3}\sum\limits_{\substack{i+j+k=p^{\alpha}q^{\beta}\\i,j,k\in\mathcal{P}_{pq}\\i,j,k~are~all~odd}}\frac{1}{ijk} +\frac{1}{3}(\sum\limits_{\substack{i+j+k=p^{\alpha}q^{\beta}\\i,j,k\in\mathcal{P}_{pq}\\i,j,k~are~all~odd}}\frac{1}{ijk}\\
  \nonumber & &+\sum\limits_{\substack{i+j+k=p^{\alpha}q^{\beta}\\i,j,k\in\mathcal{P}_{pq}\\exactly ~one~ of~ i,j,k~is~odd}}\frac{1}{ijk})\\.
   \nonumber &=&-\frac{4}{3}\sum\limits_{\substack{i+j+k=p^{\alpha}q^{\beta}\\i,j,k\in\mathcal{P}_{pq}\\i,j,k~are~all~odd}}\frac{1}{ijk}+\frac{1}{3}Z(p^{\alpha}q^{\beta}),
  \end{eqnarray}
 where we use (\ref{9}) in the last equation.

By Theorem 2, we have
 \begin{eqnarray}
  \nonumber \sum\limits_{\substack{i+j+k=p^{\alpha}q^{\beta}\\i,j,k\in\mathcal{P}_{pq}\\i,j,k~are~all~odd}}\frac{1}{ijk}\equiv\frac{7}{16}Z(p^{\alpha}q^{\beta})\pmod{p^{\alpha}}.
  \end{eqnarray}
 Therefore, we complete the proof of Theorem 3.
\end{proof}

\textbf{Remark 1}~ By Lemma 2 and Theorem 3 in \cite{CSJ}, let
$p,~q$ be distinct odd primes and $\alpha,~\beta$ positive
integers, if and only if $p=q^{2}+q+1$ or $q=p^{2}+p+1$ or
$p|q^{2}+q+1$ and $q|p^{2}+p+1$, we have
 \begin{equation*}
 \sum\limits_{i+j+k=p^{\alpha}q^{\beta}\atop{i,j,k\in\mathcal{P}_{pq}}}\frac{(-1)^{i}}{ijk}\equiv 0\pmod{p^{\alpha}q^{\beta}}
  \end{equation*}
and
\begin{equation*}
   \sum\limits_{\substack{i+j+k=p^{\alpha}q^{\beta}\\i,j,k\in\mathcal{P}_{pq}\\i\equiv j\equiv k\equiv 1\pmod{2}}}\frac{1}{ijk}\equiv 0\pmod{p^{\alpha}q^{\beta}}.
  \end{equation*}
  When $n=pq$, $p,~q$ are distinct odd primes, in \cite{CSJ},we have
  \begin{eqnarray}
    \nonumber Z(n)\equiv 6(1+\frac{3}{\phi(n)-2})(1+\frac{1}{(\phi(n)-1)^{3}})B_{\phi(n)-2}\pmod{n}.
  \end{eqnarray}
  Hence, by Theorem 2 and Theorem 3,  we have

  \begin{equation*}
 \sum\limits_{i+j+k=n\atop{i,j,k\in\mathcal{P}_{n}}}\frac{(-1)^{i}}{ijk}\equiv -\frac{3}{2}(1+\frac{3}{\phi(n)-2})(1+\frac{1}{(\phi(n)-1)^{3}})B_{\phi(n)-2}\pmod{n}
  \end{equation*}
  and
\begin{equation*}
   \sum\limits_{\substack{i+j+k=n\\i,j,k\in\mathcal{P}_{n}\\i\equiv j\equiv k\equiv 1\pmod{2}}}\frac{1}{ijk}\equiv \frac{21}{8}(1+\frac{3}{\phi(n)-2})(1+\frac{1}{(\phi(n)-1)^{3}})B_{\phi(n)-2}\pmod{n}.
  \end{equation*}
  In particular, for any odd prime $p$ and positive integer $r$, we have
  \begin{equation*}
 \sum\limits_{i+j+k=p^{r}\atop{i,j,k\in\mathcal{P}_{p}}}\frac{(-1)^{i}}{ijk}\equiv \frac{1}{2}p^{r-1}B_{p-3} ~(\bmod ~ p^{r})
  \end{equation*}
  and
\begin{equation*}
   \sum\limits_{\substack{i+j+k=p^{r}\\i,j,k\in\mathcal{P}_{p}\\i\equiv j\equiv k\equiv 1\pmod{2}}}\frac{1}{ijk}\equiv -\frac{7}{8}p^{r-1}B_{p-3} ~(\bmod ~ p^{r}).
  \end{equation*}

 \textbf{Remark 2}~By the conjecture and Chinese Remainder Theorem, we can count out
  the remainder of
\begin{eqnarray}
    \nonumber
    \sum\limits_{i+j+k=n\atop{i,j,k\in\mathcal{P}_{n}}}\frac{(-1)^{i}}{ijk}~~~~and~~~~
    \sum\limits_{\substack{i+j+k=n\\i,j,k\in\mathcal{P}_{n}\\i\equiv j\equiv k\equiv 1\pmod{2}}}\frac{1}{ijk}
  \end{eqnarray}
   modulo $n$ for any positive integer
  $n$.\\

  \textbf{Problem 1}~Can we find arithmetical functions $f(n)$ and $g(n)$ such that
\begin{equation*}
  \sum\limits_{i+j+k=n\atop{i,j,k\in\mathcal{P}_{n}}}\frac{(-1)^{i}}{ijk}\equiv f(n)\pmod{n}~~~~and~~~~
 \sum\limits_{\substack{i+j+k=n\\i,j,k\in\mathcal{P}_{n}\\i\equiv j\equiv k\equiv 1\pmod{2}}}\frac{1}{ijk} \equiv g(n)\pmod{n}.
\end{equation*}

%--------------------4.The proofs of the corollary 1---------------------------------------

%--------------------4.The proofs of the corollary 2---------------------------------------

%--------------------5.Bibliography---------------------------------------

\end{document}